\theoremstyle{plain}
\newtheorem{thm}{Theorem}[section]
\newtheorem*{thm*}{Theorem}
\newtheorem*{cor*}{Corollary}
\newtheorem{prop}[thm]{Proposition}
\newtheorem{proposition}[thm]{Proposition}
\newtheorem{claim}{Claim}
\newtheorem*{claim*}{Claim}
\theoremstyle{definition}
\newtheorem{defn}[thm]{Definition}
\newtheorem{ex}[thm]{Example}
\newtheorem{prob}[thm]{Problem}
\theoremstyle{remark}
\numberwithin{equation}{thm}
\def\GCD{\operatorname{GCD}}
\def\Ext{\operatorname{Ext}}
\def\Ker{\operatorname{Ker}}
\def\Hom{\operatorname{Hom}}
\def\mod{\mathrm{mod}}
\def\a{\mathrm a}
\newcommand{\rma}{\mathrm{a}}
\newcommand{\rmf}{\mathrm{f}}
\newcommand{\rmr}{\mathrm{r}}\newcommand{\rmt}{\mathrm{t}}
\newcommand{\rmI}{\mathrm{I}}\newcommand{\rmK}{\mathrm{K}}
\newcommand{\rmQ}{\mathrm{Q}}
\newcommand{\fkm}{\mathfrak{m}}
\newcommand{\fkM}{\mathfrak{M}}
\def\height{\mathrm{ht}}
\def\PF{\operatorname{PF}}
\title[Pseudo-Frobenius numbers versus defining ideals]{Pseudo-Frobenius numbers versus defining ideals \\
in numerical semigroup rings}
\author{Shiro Goto}
\address{Department of Mathematics, School of Science and Technology, Meiji University, 1-1-1 Higashi-mita, Tama-ku, Kawasaki 214-8571, Japan}
\email{shirogoto@gmail.com}
\author{Do Van Kien}
\address{Department of Mathematics, Hanoi Pedagogical University N$^{0}2$, Vinh Phuc, Vietnam}
\email{dovankien@hpu2.edu.vn}
\author{Naoyuki Matsuoka}
\address{Department of Mathematics, School of Science and Technology, Meiji University, 1-1-1 Higashi-mita, Tama-ku, Kawasaki 214-8571, Japan}
\email{naomatsu@meiji.ac.jp}
\author{Hoang Le Truong}
\address{Institute of Mathematics, Vietnam Academy of Science and Technology, 18 Hoang Quoc Viet Road, 10307 Hanoi, Vietnam}
\email{hltruong@math.ac.vn}
\thanks{2010 {\em Mathematics Subject Classification.} 	13A02, 13C05, 13D02, 13H10.}
\thanks{{\em Key words and phrases.} Cohen-Macaulay ring, Gorenstein ring, almost Gorenstein ring, canonical module, numerical semigroup, pseudo-Frobenius number, minimal free resolution} 
\thanks{The first author was partially supported by JSPS Grant-in-Aid for Scientific Research (C) 16K05112. The second author was partially supported by the International Research Supporting Program of Meiji University. The third author was partially supported by JSPS Grant-in-Aid for Scientific Research 26400054. The fourth author was partially supported by JSPS bilateral programs (Joint Research) and the Vietnam National Foundation for Science and Technology Development (NAFOSTED) under grant number 101.04-2017.14.}
\begin{document}
\maketitle

\setlength{\baselineskip}{17pt}

\begin{abstract}
The structure of the defining ideal of the semigroup ring $k[H]$ of a numerical semigroup $H$ over a field $k$ is described, when the pseudo-Frobenius numbers of $H$ are multiples of a fixed integer.
\end{abstract}



\section{Introduction}\label{section1}
The study of numerical semigroup rings is one of the most fruitful sources for researches, not only about numerical semigroups but also about commutative algebra. As for the latter case, in \cite{H} J. Herzog provided new possible areas of study for one-dimensional Cohen-Macaulay rings. He showed that the defining ideals of semigroup rings for $3$-generated numerical semigroups are generated by the maximal minors of $2 \times 3$  matrices, provided the defining ideals are not complete intersections. This was a glorious departure of combinatorial commutative algebra. Besides, his result has been used, e.g., to compute the symbolic powers of the defining ideals for space monomial curves, and proved very useful in order to produce counter examples to Cowsik's question (\cite{GNW}). Since the article [H], many authors have been interested, e.g., in the question of estimating the number of generators of the defining ideal of a given numerical semigroup ring. Among many striking works, in \cite{B}  H. Bresinski succeeded in describing the structure of the defining ideals of $4$-generated Gorenstein numerical semigroup rings. It should be noted here that even for the $4$-generated case, in general the number of generators of the defining ideals can be as large as one needs, and the behavior of defining ideals is rather wild, unless any specific conditions on the semigroups are equipped.

The purpose of the present article is to study the structure of the defining ideals in the case where the pseudo-Frobenius numbers of the semigroup are multiples of a fixed integer. To explain our motivation and the meaning of our result, we need some notation. In what follows,
 let $a_1, a_2, \ldots , a_n \in \mathbb{Z}$ be positive integers such that $\GCD(a_1, a_2, \ldots, a_n) =1$. Let $$H=\left<a_1, a_2, \ldots , a_n\right> = \left\{\sum_{i=1}^n c_i a_i \  \middle| \  0 \le c_i \in \mathbb{Z} \text{ for all }1\le i \le n \right\}$$ be the numerical semigroup generated by the integers $a_i$'s. Let $k$ be a field. We set $R=k[H] = k[t^{a_1}, t^{a_2}, \ldots , t^{a_n}]$ and call it the numerical semigroup ring of $H$ over $k$, where $t$ is an indeterminate. Let $S=k[x_1,x_2, \ldots , x_n]$ be the weighted polynomial ring over $k$ with $\deg x_i = a_i$ for each $1 \le i \le n$. Let $\varphi: S \to R$ denote the homomorphism of graded $k$-algebras defined by $\varphi(x_i) = t^{a_i}$ for all $1 \le i \le n$. Let $I=\Ker \varphi$ be the defining ideal of $R$. With this notation, in this article we are interested in the following problem. 

\begin{prob}
What kind of structures does the ideal $I$ enjoy?
\end{prob}

As stated above, when the number $n$ is small, a few answers are already known. The main result for the case of $n=3$ is due to Herzog (\cite{H}). When $n=4$, the problem suddenly becomes complicated, and there are only partial answers. We refer to two articles: one is \cite{B} and the other one is \cite{K}, where J. Komeda gave a description of a minimal system of generators of $I$ in the case where the semigroup $H$ is pseudo-symmetric.

Our aim of the present article is to find the connection with the behavior of pseudo-Frobenius numbers of $H$ and the generation of the defining ideal of $R=k[H]$, where the embedding dimension $n$ of $R$ is arbitrary. To state our result, we need more notation. Let $\rmf(H) = \max(\mathbb{Z} \setminus H)$ denote the Frobenius number of $H$. We set $$\PF(H) = \{\alpha \in \mathbb{Z} \setminus H \mid \alpha + a_i \in H \text{ for all } 1 \le i \le n\}$$ and call the elements in $\PF(H)$ pseudo-Frobenius numbers of $H$. Hence $\rmf(H) \in \PF(H)$ and $$\rmK_R = \sum\limits_{\alpha \in \PF(H)} R t^{-\alpha}$$ (\cite{GW}), where  $\rmK_R$ denotes the graded canonical module of $R$. Therefore, the $a$-invariant $\mathrm{a}(R)$ of $R$ (resp. the Cohen-Macaulay type $\rmr(R)$ of $R$) is given by $\rma(R) = \rmf(H)$ (resp. $\rmr(R) = \sharp \PF(H)$). For a given matrix $A$ with entries in $S$, we denote by $\rmI_2(A)$ the ideal of $S$ generated by $2 \times 2$ minors of $A$. With this notation, the main result of the present article is stated as follows.

\begin{thm}\label{MainThm}
Let $H=\left<a_1, a_2, \ldots, a_n\right>$~$($$n \ge 3$$)$ be a numerical semigroup and assume that $H$ is minimally generated by the $n$ numbers $\{a_i\}_{1 \le i \le n}$. Then the following conditions are equivalent.
\begin{enumerate}
\item[{\rm (1)}] $I=\rmI_2 \begin{pmatrix} f_1 & f_2 & \cdots & f_n\\x_1 & x_2 & \cdots & x_n\end{pmatrix}$ for some homogeneous elements $f_1, f_2, \ldots, f_n \in S_+=(x_i \mid 1 \le i \le n)$.
\item[{\rm (2)}] After suitable permutations of $a_1, a_2, \ldots , a_n$  if necessary, we have
$$
I=\rmI_2 \begin{pmatrix} 
x_2^{\ell_2} &  x_3^{\ell_3}& \cdots & x_n^{\ell_n} & x_1^{\ell_1} \\
x_1 &  x_2 & \cdots & x_{n-1} & x_n\end{pmatrix}
$$
for some positive integers $\ell_1, \ell_2, \ldots ,\ell_n >0$.
\item[{\rm (3)}] There exists an element $\alpha \in \PF(H)$ such that $(n-1)\alpha \notin H$.
\end{enumerate}
When this is the case, the following assertions hold true.
\begin{enumerate}
\item[{\rm (a)}] For each $1 \le i \le n$, we have $\ell_i = \min\{\ell >0 \mid \ell a_i \in H_i\} -1$, where $$H_i =\left<a_1, \ldots, \overset{\vee}{a_i}, \ldots, a_n\right>.$$
\item[{\rm (b)}] $\alpha = \deg f_i - a_i$ for all $1 \le i \le n$.
\item[{\rm (c)}] $\PF(H) = \{\alpha, 2\alpha, \ldots , (n-1)\alpha\}$. 
\item[{\rm (d)}] The ring $R=k[H]$ is an almost Gorenstein graded ring, i.e., the numerical semigroup $H$ is almost symmetric.
\end{enumerate}
\end{thm}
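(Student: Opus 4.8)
The plan is to prove the three conditions equivalent through the cycle $(2)\Rightarrow(1)\Rightarrow(3)\Rightarrow(2)$, harvesting the auxiliary assertions (a)--(d) en route. The implication $(2)\Rightarrow(1)$ is immediate: the matrix in (2) already has the shape demanded by (1), with $f_i=x_{i+1}^{\ell_{i+1}}$ read cyclically (so $f_n=x_1^{\ell_1}$). For $(1)\Rightarrow(3)$ I would argue homologically. First, homogeneity of the minors $f_ix_j-f_jx_i$ forces $\deg f_i-a_i$ to be independent of $i$; calling this common value $\alpha$ gives (b) at once. Since $I=\rmI_2(A)$ must have codimension $n-1=\dim S-\dim R$ and all entries of $A$ lie in $\m=S_+$, the Eagon--Northcott complex is the minimal graded free resolution of $R$; in particular $\rmr(R)=n-1$, so $\sharp\PF(H)=n-1$. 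Reading off the graded twists of the last term $\wedge^nF\otimes\operatorname{Sym}_{n-2}(G)^{*}\otimes(\wedge^2G)^{*}$ (where the two rows of $A$ give $G$ the row-degrees $0$ and $\alpha$) shows its generators sit in degrees $\{\sum_{i=1}^n a_i+m\alpha:1\le m\le n-1\}$. Dualizing into $\rmK_S=S(-\sum a_i)$ and matching with $\rmK_R=\sum_{\beta\in\PF(H)}Rt^{-\beta}$ identifies the generator degrees of $\rmK_R$ as $\{-m\alpha:1\le m\le n-1\}$, whence $\PF(H)=\{\alpha,2\alpha,\dots,(n-1)\alpha\}$, which is (c); since pseudo-Frobenius numbers avoid $H$, in particular $(n-1)\alpha\notin H$, giving (3).

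The substantial direction is $(3)\Rightarrow(2)$. Fix $\alpha\in\PF(H)$ with $(n-1)\alpha\notin H$. The first step is numerical: using the standard fact that $\alpha+h\in H$ for every $0\neq h\in H$, one gets $j\alpha+a_i\in H$ for all $i$ and all $j\ge1$ by induction, and if some $j\alpha\in H$ with $j\le n-1$ then $(n-1)\alpha\in H$ by the same fact, a contradiction; hence $j\alpha\in\PF(H)$, so $j\alpha\notin H$, for every $1\le j\le n-1$. The key structural step is then a graph argument. Form the directed graph on $\{1,\dots,n\}$ with an edge $i\to j$ precisely when $a_i+\alpha-a_j\in H$. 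Because $\alpha+a_i\in H$ while $\alpha\notin H$, every vertex has out-degree $\ge1$ and there are no loops, while a directed cycle of length $k$ would yield $k\alpha\in H$, impossible for $1\le k\le n-1$. Consequently any choice function selecting one out-edge at each vertex is a bijection (a non-bijection leaves a vertex of in-degree $0$, confining its cycles to $\le n-1$ vertices); comparing two distinct out-edges at one vertex produces a non-injective choice function, a contradiction. Thus every out-degree equals $1$, the resulting permutation $\sigma$ has all cycles of length $\ge n$ and so is an $n$-cycle, and the unique out-neighbour pins the support: every representation of $a_i+\alpha$ uses only $a_{\sigma(i)}$. Relabelling so that $\sigma=(1\,2\,\cdots\,n)$ yields $a_{i-1}+\alpha=\ell_i a_i$ for unique $\ell_i>0$; then $(\ell_i+1)a_i=a_{i-1}+\ell_{i+1}a_{i+1}\in H_i$ while the support statement gives $\ell_i a_i=a_{i-1}+\alpha\notin H_i$, which is exactly (a).

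These relations assemble the matrix $M$ of (2), whose consecutive $2\times2$ minors are the binomials $x_{i+1}^{\ell_{i+1}+1}-x_ix_{i+2}^{\ell_{i+2}}$; applying $\varphi$ and using $a_i+\alpha=\ell_{i+1}a_{i+1}$ shows every $2\times2$ minor lies in $I$, so $J:=\rmI_2(M)\subseteq I$. To finish I must upgrade this to $J=I$. The plan is to verify $V(J)=V(I)$ set-theoretically -- a point at which $M$ has rank $\le1$ must, by the cyclic relations $b_{i+1}^{\ell_{i+1}+1}=b_ib_{i+2}^{\ell_{i+2}}$, lie on the monomial curve $\Spec R$ -- so that $\operatorname{grade}J=n-1$; then the Eagon--Northcott complex resolves $S/J$ as a one-dimensional Cohen--Macaulay (hence unmixed) ring, and a multiplicity count $\e(S/J)=\e(R)$ together with the surjection $S/J\twoheadrightarrow R$ forces the kernel $I/J$ to vanish. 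This establishes (2). Since $(2)\Rightarrow(1)$ then re-supplies the resolution, (b) and (c) hold under (3) as well, and the symmetry $j\alpha+(n-1-j)\alpha=(n-1)\alpha=\rmf(H)$ of $\PF(H)$ verifies Nari's criterion for $H$ to be almost symmetric, i.e.\ (d).

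I expect the genuine obstacle to be this last point: proving that the minors of $M$ \emph{generate} $I$ rather than merely lie inside it. The graph argument delivers the cyclic pure-power relations cleanly, but promoting $J\subseteq I$ to $J=I$ requires controlling the radical (or the multiplicity) of the determinantal ideal $\rmI_2(M)$, and the special, non-generic shape of $M$ means its codimension-$(n-1)$ property has to be checked by hand at the generic point of the curve rather than quoted from the generic determinantal theory.
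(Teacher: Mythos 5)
Your implications $(2)\Rightarrow(1)$ and $(1)\Rightarrow(3)$ run along the same lines as the paper (Eagon--Northcott resolution, then the $\rmK_S$-dual giving the minimal presentation of $\rmK_R$ and hence $\PF(H)=\{\alpha,2\alpha,\dots,(n-1)\alpha\}$). The first half of your $(3)\Rightarrow(2)$ is correct and genuinely different from the paper: where the paper derives the cyclic form of the RF-matrix by induction on $n$ together with row-sum manipulations (its Propositions 5.1 and 5.2), your directed-graph/choice-function argument produces the relations $a_{i-1}+\alpha=\ell_i a_i$ with no induction at all, which is a real simplification. One small repairable omission in assertion (a): you rule out $\ell_i a_i\in H_i$ but not $\ell a_i\in H_i$ for $0<\ell<\ell_i$; this does follow from your support statement, since $\ell a_i\in H_i$ would make $a_{i-1}+\alpha=\ell_i a_i=\ell a_i+(\ell_i-\ell)a_i$ into a representation involving a generator other than $a_i$.

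The genuine gap is precisely the step you flag at the end, and it is not a technicality: it is the heart of the theorem. Your plan rests on the assertion that a point where $M$ has rank $\le 1$ must ``by the cyclic relations'' lie on the monomial curve. That assertion does not follow from the cyclic relations. On the torus $(k^{\times})^n$ the zero locus of $\rmI_2(M)$ is the subgroup dual to $\mathbb{Z}^n/L$, where $L$ is the lattice spanned by the exponent vectors $v_i-v_j$, $v_i=\ell_{i+1}e_{i+1}-e_i$, of the minors; it is a union of translates of the curve, and (in characteristic zero) the number of translates is the order of the torsion subgroup of $\mathbb{Z}^n/L$. So your claim is exactly the statement that $L$ is saturated in $\Ker(\deg)$, and this can fail for matrices of this shape: for $n=3$ and $(\ell_1,\ell_2,\ell_3)=(1,4,7)$ the relations $a_i+\alpha=\ell_{i+1}a_{i+1}$ hold with $(a_1,a_2,a_3)=(11,5,2)$, $\alpha=9$, yet the index is $3$ and $V(J)$ meets the torus in three translates of the curve. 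In that example $\alpha\notin\PF(H)$ and the $a_i$ do not minimally generate $H$ --- which is exactly the point: any proof of your set-theoretic claim must invoke hypothesis (3), and yours never does. The same objection hits your multiplicity count: granting Cohen--Macaulayness and $V(J)=V(I)$, the associativity formula gives $\mathrm{e}(S/J)=\operatorname{length}_{S_I}\bigl((S/J)_I\bigr)\cdot \mathrm{e}(R)$, so the equality $\mathrm{e}(S/J)=\mathrm{e}(R)$ is equivalent to $JS_I=IS_I$, which is again the thing to be proved, and you give no independent way of computing $\mathrm{e}(S/J)$. The paper closes this gap by a different, homological device: it lifts the surjection $S/J\twoheadrightarrow R$ to a comparison of minimal graded free resolutions, dualizes into $\rmK_S$, uses the sequence $0\to I/J\to S/J\to R\to 0$ (with $I/J$ a one-dimensional Cohen--Macaulay module if nonzero) to see that $\rmK_R\to\rmK_{S/J}$ is injective, and then --- using that $i\alpha\in\PF(H)$ for all $1\le i\le n-1$, which is where the arithmetic hypothesis actually enters --- shows that the scalar diagonal of the induced map of minimal presentations is invertible (its Proposition 5.4), whence $\rmK_R\cong\rmK_{S/J}$ and $I=J$. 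Some such use of the pseudo-Frobenius structure is unavoidable at this step, and your proposal is missing it.
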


With the notation of Theorem \ref{MainThm}, suppose that $n=3$ and $R=k[H]$ is not a Gorenstein ring. We then have $\rmr(R) = 2$, and the defining ideal of $R$ is generated by the $2 \times 2$ minors of a matrix of the form
$$
\begin{pmatrix}
x_1^\alpha & x_2^\beta & x_3^\gamma\\
x_2^{\beta'} & x_3^{\gamma'} & x_1^{\alpha'}
\end{pmatrix}
$$
where $\alpha, \beta, \gamma, \alpha', \beta', \gamma' > 0$ (\cite{H}). By \cite{GMP, NNW} we see that  $R$ is an almost Gorenstein graded ring if and only if either $\alpha = \beta=\gamma =1$ or $\alpha' = \beta' = \gamma'=1$, while it is fairly well-known that $R$ is an almost Gorenstein graded ring if and only if $\PF(H) = \{\frac{\rmf(H)}{2}, \rmf(H)\}$ (see Proposition \ref{charAG}). Theorem \ref{MainThm} provides the case of higher embedding dimension $n \ge 3$ with an extension of these equivalences.

Let us explain how this paper is organized. The proof of Theorem \ref{MainThm} shall be given in Section 5. In Sections 2, 3, and 4,  we summarize some preliminaries, which we use to prove Theorem \ref{MainThm}. Section 2 consists of a survey on the almost Gorenstein numerical semigroup rings.  In Section 3, we review row-factorization matrices (RF-matrices for short) introduced by A. Moscariello \cite{M}. In Section 4 a remark on the Eagon-Northcott complex associated to a certain $2 \times n$ matrix consisting of homogeneous polynomials, which plays a key role in our proof of Theorem \ref{MainThm}. In Section 6, we give a few examples in order to illustrate Theorem \ref{MainThm}.


\section{Almost Gorenstein numerical semigroup rings}

The notion of an almost Gorenstein ring was introduced in 1997 by V. Barucci and R. Fr\"{o}berg \cite{BF} for one-dimensional analytically unramified local rings, where they deeply studied numerical semigroup rings, starting a beautiful theory. The first author, third author, and T. T. Phuong \cite{GMP} relaxed the notion for arbitrary Cohen-Macaulay local rings of dimension one, and subsequently, the first author, R. Takahashi, and N. Taniguchi \cite{GTT} gave the definition of an almost Gorenstein local/graded ring of higher dimension. The present interests are focused on the one-dimensional case. Therefore, we recall \cite[Definition 3.3]{GTT} in the following form.

\begin{defn}\label{defAGL}
Let $(R, \fkm)$ be a Cohen-Macaulay local ring of dimension one, possessing the  canonical module $\rmK_R$. Then we say that $R$ is an almost Gorenstein local ring, if there exists an exact sequence
$$
0 \to R \to \rmK_R \to C \to 0
$$
of $R$-modules such that $\fkm C=(0)$.
\end{defn}

Let $\overline{R}$ denote the integral closure of $R$ in the total ring $\rmQ(R)$ of fractions of $R$ and suppose that there is a fractional ideal $K$ of $R$ such that $R \subseteq K \subseteq \overline{R}$ and $K \cong \rmK_R$ as an $R$-module. Then the condition required in Definition \ref{defAGL} is equivalent to saying that $\fkm K \subseteq R$ (see \cite[Definition 3.1]{GMP} and \cite[Proposition 3.4]{GTT}).

\begin{defn}[\cite{GTT}]\label{defAGG}
Let $R = \bigoplus_{n \ge 0}R_n$ be a Cohen-Macaulay graded ring of dimension one such that $R_0$ is a local ring. Suppose that $R$ possesses the graded canonical module $\rmK_R$. Let $\fkM$ denote the unique graded maximal ideal of $R$ and $a=\rma(R)$ the a-invariant of $R$. We say that $R$ is an almost Gorenstein graded ring, if there exists an exact sequence
$$
0 \to R \to \rmK_R(-a) \to C \to 0
$$
of graded $R$-modules such that $\fkM C=(0)$. Here $\rmK_R(-a)$ stands for the graded $R$-module whose underlying $R$-module is the same as that of $\rmK_R$ and whose grading is given by $[\rmK_R(-a)]_n = [\rmK_R]_{n-a}$ for all $n \in \mathbb{Z}$.
\end{defn}

Consequently, every Gorenstein local/graded ring is an almost Gorenstein ring. The condition in Definition \ref{defAGL} (resp. Definition \ref{defAGG}) asserts that once $R$ is an almost Gorenstein local (resp. graded) ring, either $R$ is a Gorenstein ring or, even though $R$ is not a Gorenstein ring, the local (resp. graded) ring $R$ is embedded into the module $\rmK_R$ (resp. the graded module $\rmK_R(-a)$) and the difference $C$ is a vector space over $R/\fkm$ (resp. $R/\fkM$).

If $R$ is an almost Gorenstein graded ring, then the localization $R_\fkM$ of $R$ by $\fkM$ is an almost Gorenstein local ring, which readily follows from the definition. The converse does not hold true in general (\cite[Example 8.8]{GTT}, \cite[Theorems 2.7, 2.8]{GMTY}). However, it does for numerical semigroup rings, as we confirm in the following.

\begin{prop}[{cf. \cite{GMP, GTT, N, NNW}}]\label{charAG}
Let $R=k[H]$ be the semigroup ring of a numerical semigroup $H = \left<a_1, a_2, \ldots ,a_n\right>$ over a field $k$ and let $\fkM = (t^{a_1}, t^{a_2}, \ldots, t^{a_n})$ be the graded maximal ideal of $R$. Let us write $\PF(H) = \{\alpha_1, \alpha_2, \ldots , \alpha_r\}$, so that $\alpha_1 < \alpha_2 < \cdots < \alpha_r$, whence $\alpha_r = \rmf(H)$. Then the following conditions are equivalent.
\begin{enumerate}[{\rm (1)}]
\item $R$ is an almost Gorenstein graded ring.
\item $R_\fkM$ is an almost Gorenstein local ring.
\item $\alpha_i + \alpha_{r-i} = f(H)$ for all $1 \le i \le r-1$.
\end{enumerate}
\end{prop}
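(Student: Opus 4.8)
The plan is to collapse both (1) and (2) to a single explicit containment of fractional ideals and then translate that containment into the symmetry condition (3). Set $f=\rmf(H)$ and consider
$$
K \;=\; t^{f}\rmK_R \;=\; \sum_{\alpha \in \PF(H)} R\,t^{\,f-\alpha}.
$$
Since $\alpha \le f$ for every $\alpha\in\PF(H)$ and $f-\alpha_r=0$, we get $R \subseteq K \subseteq k[t]=\overline{R}$, and $K$ represents $\rmK_R$ up to the shift by $a=\rma(R)=f$. First I would record that, with this $K$, condition (1) is equivalent to $\fkM K \subseteq R$: the graded embedding $R\hookrightarrow \rmK_R(-a)$ sending $1\mapsto t^{-f}$ has image $t^{-f}R$ and cokernel $C=\rmK_R/t^{-f}R$, and $\fkM C=0$ holds exactly when $\fkM\,t^{f}\rmK_R\subseteq R$, i.e. $\fkM K\subseteq R$. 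For (2), the local criterion recalled after Definition \ref{defAGL} says $R_\fkM$ is almost Gorenstein iff $\fkm K_\fkM\subseteq R_\fkM$ (note $K_\fkM$ is the canonical ideal of $R_\fkM$ since integral closure and canonical modules commute with this localization). The point where the graded and local notions coincide \emph{for numerical semigroup rings} is that $R_0=k$ is a field, so every homogeneous element outside $\fkM$ is a unit; hence the homogeneous containment $\fkM K\subseteq R$ is equivalent to its localization $\fkm K_\fkM\subseteq R_\fkM$. Thus (1) $\Leftrightarrow$ (2) $\Leftrightarrow$ $\fkM K\subseteq R$.

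Next I would make $\fkM K\subseteq R$ combinatorial. Writing $K=\bigoplus_{\lambda} k\,t^{\lambda}$ with $\lambda$ ranging over $\bigcup_{\alpha\in\PF(H)}\big(H+(f-\alpha)\big)$, the containment $\fkM K\subseteq R$ holds iff $a_j+(f-\alpha)\in H$ for all generators $a_j$ and all $\alpha\in\PF(H)$; because $H$ is closed under addition it suffices to test the elements $f-\alpha$ themselves (the case $h=0$). By the very definition of $\PF(H)$, the statement ``$a_j+(f-\alpha)\in H$ for all $j$'' says precisely that $f-\alpha\in H\cup\PF(H)$. Therefore $\fkM K\subseteq R$ is equivalent to $f-\alpha\in H\cup\PF(H)$ for every $\alpha\in\PF(H)$.

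Finally I would sharpen this and close the loop. For $\alpha=\alpha_r=f$ the condition is automatic. For $1\le i\le r-1$ I would first show $f-\alpha_i\notin H$: if $f-\alpha_i=h\in H$, then $f=\alpha_i+h$, and since adding any positive element of $H$ to a pseudo-Frobenius number lands in $H$, the case $h>0$ forces $f\in H$ while $h=0$ forces $\alpha_i=f=\alpha_r$ — both absurd. Hence the condition becomes $f-\alpha_i\in\PF(H)$ for all $1\le i\le r-1$. As $0<\alpha_i<f$ gives $0<f-\alpha_i<f$, this exhibits $\sigma\colon\alpha_i\mapsto f-\alpha_i$ as an order-reversing permutation of the chain $\{\alpha_1<\cdots<\alpha_{r-1}\}$, which must be $\alpha_i\mapsto\alpha_{r-i}$; that is exactly $\alpha_i+\alpha_{r-i}=f$, condition (3). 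Conversely (3) gives $f-\alpha_i=\alpha_{r-i}\in\PF(H)$, so $\fkM K\subseteq R$. Combined with the first paragraph this yields (1) $\Leftrightarrow$ (2) $\Leftrightarrow$ (3). The routine bookkeeping is the descent from the local to the homogeneous containment; the one genuinely substantive step is the order-reversing involution argument, which is where the symmetric shape of $\PF(H)$ forced by almost Gorensteinness emerges.
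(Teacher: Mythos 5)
Your proposal is correct and follows essentially the same route as the paper: the paper's proof also sets $K=t^{\rmf(H)}\rmK_R$, identifies both (1) and (2) with the containment $\fkM K\subseteq R$ (using that $K/R$ is graded to pass between the graded and local conditions), and translates that containment into $\rmf(H)-\alpha_i\in\PF(H)$ for $1\le i\le r-1$. The only difference is one of detail: you spell out the steps the paper leaves implicit, namely that $\rmf(H)-\alpha_i\notin H$ and that the order-reversing injection $\alpha_i\mapsto \rmf(H)-\alpha_i$ forces $\alpha_i+\alpha_{r-i}=\rmf(H)$.
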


\begin{proof}
We set $K=t^{f(H)} \rmK_R$. Then, $R \subseteq K \subseteq k[t] = \overline{R}$ and $K \cong \rmK_R(-f(H))$ as a graded $R$-module. Because $K/R$ is a graded $R$-module, $\fkM{\cdot}K \subseteq R$ if and only if $\fkM{\cdot}K_\fkM \subseteq R_\fkM$, and the former condition is equivalent to saying that $a_j +(f(H) - \alpha_i) \in H$ for all $1 \le i \le r-1$ and $1 \le j \le n$, i.e., $f(H) - \alpha_i \in \PF(H)$ for all $1 \le i \le r-1$.
\end{proof}

\section{RF-matrices associated to pseudo-Frobenius numbers}

The notion of an RF-matrix associated to a pseudo-Frobenius number $\alpha \in \PF(H)$ was introduced by A. Moscariello \cite{M} in 2016, and  by means of RF-matrices, Moscariello proved that the Cohen-Macaulay type $\rmt(H)$ of any $4$-generated numerical semigroup $H$ is at most $3$, if the semigroup ring $k[H]$ over a filed $k$ is an almost Gorenstein graded ring. Because RF-matrices play a very important role in the present paper, let us recall here the definition of RF-matrices also.

\begin{defn}[\cite{M}]\label{defRF}
Let $H = \left<a_1, a_2, \ldots , a_n\right>$ be a numerical semigroup and $\alpha \in \PF(H)$ a pseudo-Frobenius number of $H$. Then an $n \times n$ matrix $M = (m_{ij})$ of integers is said to be an RF-matrix associated to $\alpha$, if the following conditions are satisfied.
\begin{enumerate}[(1)]
\item $m_{ii} = -1$ for all $1 \le i \le n$, 
\item $m_{ij} \ge 0$ if $i \ne j$, and
\item $M{\cdot}\begin{pmatrix}a_1\\a_2\\\vdots\\a_n\end{pmatrix} =\begin{pmatrix}\alpha\\\alpha\\\vdots\\\alpha\end{pmatrix}$.
\end{enumerate}
\end{defn}

For each $\alpha \in \PF(H)$, one can associate at least one RF-matrix $M$  to $\alpha$. In fact, since $\alpha \in \PF(H)$, we have $\alpha + a_i \in H$ for each $1 \le i \le n$. Therefore
\begin{eqnarray*}
\alpha + a_1 &=& \ell_{11} a_1 + \ell_{12} a_2 + \cdots + \ell_{1n}a_n\\
\alpha + a_2 &=& \ell_{21} a_1 + \ell_{22} a_2 + \cdots + \ell_{2n}a_n\\
&\vdots&\\
\alpha + a_n &=& \ell_{n1} a_1 + \ell_{n2} a_2 + \cdots + \ell_{nn}a_n
\end{eqnarray*}
for some non-negative integers $\{\ell_{ij}\}_{1\le i,j \le n}$. If $\ell_{ii} > 0$ for some $1 \le i \le n$, then $$\alpha = \sum_{j \ne i }\ell_{ij} a_j + (\ell_{ii}-1)a_{i} \in H,$$ which contradicts the definition of pseudo-Frobenius numbers. Hence $\ell_{ii} = 0$ for all $1 \le i \le n$. We now set $m_{ii} = -1$ for $1 \le i \le n$ and $m_{ij} = \ell_{ij}$ for $i \ne j$. Then $M=(m_{ij})$ is a required RF-matrix associated to $\alpha$. For a given pseudo-Frobenius number, RF-matrices are not necessarily uniquely determined in general (\cite[Example 3]{M}).

We need the following to prove Theorem \ref{MainThm}

\begin{prop}\label{relRF}
Let $H = \left<a_1, a_2, \ldots , a_n\right>$ be a numerical semigroup. Let $\alpha \in \PF(H)$  and  let $M = (m_{ij})$ be an RF-matrix associated to $\alpha$. Let $k[H]$ be the semigroup ring over a field $k$. Then 
$$
\rmI_2\begin{pmatrix}
f_1 & f_2 & \cdots & f_n\\
x_1 & x_2 & \cdots & x_n
\end{pmatrix} \subseteq \Ker \varphi,
$$
where $\Ker \varphi$ stands for the defining ideal of $k[H]$ in the polynomial ring $k[x_1, x_2, \ldots, x_n]$ and $f_i = \prod_{j \ne i} x_j^{m_{ij}}$ for each $1 \le i \le n$.
\end{prop}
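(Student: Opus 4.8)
The plan is to show that each generating $2\times 2$ minor of the displayed matrix lies in $\Ker\varphi$, and since $\rmI_2$ of a $2\times n$ matrix is generated by these minors it suffices to check them one at a time. For indices $1\le i<j\le n$, the $(i,j)$-minor of the matrix is
$$
\Delta_{ij} = f_i x_j - f_j x_i = x_j\prod_{k\ne i}x_k^{m_{ik}} - x_i\prod_{k\ne j}x_k^{m_{jk}}.
$$
Recall that $\varphi(x_k)=t^{a_k}$, so $\varphi$ sends any monomial $\prod_k x_k^{c_k}$ to $t^{\sum_k c_k a_k}$; hence a binomial $u-v$ lies in $\Ker\varphi$ precisely when $u$ and $v$ have the same $\varphi$-degree, i.e. the same weighted exponent sum. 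Thus the whole proof reduces to a degree bookkeeping: I must verify that the two monomials making up $\Delta_{ij}$ carry equal weight under the grading $\deg x_k=a_k$.

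First I would compute $\deg f_i$. By definition $f_i=\prod_{k\ne i}x_k^{m_{ik}}$, so using $m_{ii}=-1$ from Definition~\ref{defRF}(1),
$$
\deg f_i = \sum_{k\ne i} m_{ik}a_k = \sum_{k=1}^n m_{ik}a_k + a_i = \alpha + a_i,
$$
where the middle equality reinserts the diagonal term $m_{ii}a_i=-a_i$ and the final equality is exactly the defining property~\ref{defRF}(3) of the RF-matrix, namely that the $i$-th row of $M$ applied to $(a_1,\dots,a_n)^{\mathrm{t}}$ equals $\alpha$. (This incidentally recovers assertion (b) of Theorem~\ref{MainThm}.) Next I would read off the weights of the two monomials in $\Delta_{ij}$: the first, $x_j f_i$, has degree $a_j+\deg f_i = a_j + \alpha + a_i$, and the second, $x_i f_j$, has degree $a_i+\deg f_j = a_i+\alpha+a_j$. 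These are visibly equal, so $\varphi(\Delta_{ij})=t^{a_i+a_j+\alpha}-t^{a_i+a_j+\alpha}=0$, giving $\Delta_{ij}\in\Ker\varphi$.

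Having shown every generator lies in $\Ker\varphi$, I conclude $\rmI_2(\cdots)\subseteq\Ker\varphi$ as claimed. There is no genuine obstacle here: the statement is a containment rather than an equality, so no surjectivity or "all relations are accounted for" argument is needed, and the content is entirely the weight computation $\deg f_i=\alpha+a_i$, which follows immediately from unpacking the RF-matrix conditions. The only minor care required is the reindexing step that converts the row-sum $\sum_{k\ne i}m_{ik}a_k$ (with the diagonal omitted, since $f_i$ excludes $x_i$) into the full matrix-times-vector expression of condition~\ref{defRF}(3), which differs precisely by the diagonal entry $m_{ii}a_i=-a_i$; keeping that sign straight is the one place an error could creep in.
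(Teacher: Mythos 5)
Your proof is correct and takes essentially the same route as the paper: both reduce the containment to checking, generator by generator, that the two monomials in each $2\times 2$ minor have equal weighted degree under $\deg x_k=a_k$, which follows from the RF-matrix conditions $m_{ii}=-1$ and $M\,(a_1,\dots,a_n)^{\mathrm{t}}=(\alpha,\dots,\alpha)^{\mathrm{t}}$. The only cosmetic difference is that you isolate the identity $\deg f_i=\alpha+a_i$ before comparing, whereas the paper compares the row sums $\sum_{j\ne i}m_{ij}a_j$ directly.
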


\begin{proof}
It is enough to show $\deg x_{i_1} f_{i_2} = \deg x_{i_2} f_{i_1}$ for $1 \le i_1 < i_2 \le n$. Note that $$\alpha = \sum_{j \ne i_1} m_{i_1j} a_j -a_{i_2} = \sum_{j \ne i_2} m_{i_2j} a_j - a_{i_1}$$ by the definition of the RF-matrix $M$.
Hence $$\deg x_{i_1} f_{i_2} = a_{i_1} + \sum_{j \ne i_2} m_{i_2j} a_j = a_{i_2} + \sum_{j \ne i_1} m_{i_1j} a_j = \deg x_{i_2} f_{i_1}$$
because $\deg f_i = \sum_{j \ne i} m_{ij} a_j$ for all $1 \le i \le n$. 
\end{proof}

\section{Eagon-Northcott complexes}\label{ENcpx}

In this section, let us recall the Eagon-Northcott complex associated to a $2 \times n$ matrix, which plays an important role in the proof of Theorem \ref{MainThm}. Let $a_1, a_2, \ldots , a_n$ be positive integers and let $S=k[x_1,x_2, \ldots , x_n]$ be the polynomial ring over a field $k$. We regard $S$ as a $\Bbb Z$-graded ring so that $S_0=k$ and $\deg x_i = a_i$ for all $1\le i \le n$.  Let $f_1, f_2, \ldots,f_n \in S$ be homogeneous elements with $\deg f_i = b_i >0$. Suppose that $b_i - a_i$ is constant and independent of the choice of $i$. We set $\alpha = b_i - a_i$ and consider the matrix 
$$
A=\begin{pmatrix}
f_1 & f_2 & \cdots & f_n\\
x_1 & x_2 & \cdots & x_n
\end{pmatrix}.
$$
Let $F$ be a free $S$-module with rank $n$ and $\{T_i\}_{1 \le i \le n}$ a free basis. Let $K = \bigwedge F$ be the exterior algebra of $F$ over $S$. We denote by $\partial_1$ and $\partial_2$ the differentiations of the Koszul complexes $\rmK_\bullet(f_1, f_2, \ldots , f_n;S)$ and $\rmK_\bullet(x_1, x_2, \ldots , x_n;S)$, respectively. Let $U=S[y_1,y_2]$ be the polynomial ring over $S$, which we regard as a standard $\Bbb Z$-graded ring over $S$. To construct a chain complex, we set $C_0 = S$ and $C_q = K_{q+1} \otimes_S U_{q-1}$ for $1 \le q \le n-1$. Hence, for each $1 \le q \le n-1$, $C_q$ is a finitely generated free $S$-module with 
$$
\{T_\Lambda \otimes y_1^{q-1-\ell}y_2^\ell \mid \Lambda \subseteq \{1,2, \ldots , n\},\ \sharp \Lambda = q+1,\ 0 \le \ell \le q-1\}
$$
a free basis, where $T_\Lambda = T_{i_1} T_{i_2} \cdots T_{i_{q+1}}$ with $\Lambda = \{i_1 < i_2 < \cdots < i_{q+1}\}$. Then, the Eagon-Northcott complex associated to $A$ is defined to be the complex
$$
0 \to C_{n-1} \overset{d_{n-1}}{\to} C_{n-2} \overset{d_{n-2}}{\to} \cdots \to C_1 \overset{d_1}{\to} C_0 \to 0 \qquad (\mathrm{EN})
$$
of finitely generated free $S$-modules with the differentiations 
\begin{center}
$d_1(T_i T_j \otimes 1) = \det \begin{pmatrix} f_i & f_j\\x_i & x_j\end{pmatrix}$\ \ \ for \  \ $1 \le i < j \le n$ 
\end{center}and 
$$
d_q(T_\Lambda \otimes y_1^{q-1-\ell}y_2^\ell) =
\begin{cases}
\partial_1(T_\Lambda) \otimes y_1^{q-2-\ell}y_2^\ell + \partial_2(T_\Lambda) \otimes y_1^{q-1-\ell}y_2^{\ell-1} & \text{if }1 \le \ell \le q-2,\\
\partial_1(T_\Lambda) \otimes y_1^{q-2} & \text{if }\ell = 0,\\
\partial_2(T_\Lambda) \otimes y_2^{q-2} & \text{if } \ell = q-1.\\
\end{cases}
$$
With this notation, one can find in \cite{EN} that the above complex $(\mathrm{EN})$ gives rise to a free resolution of $R=S/\rmI_2(X)$, once $\height_S \rmI_2(X) = n-1$. To prove Theorem \ref{MainThm}, we use the graded structure of the complex $(\mathrm{EN})$ more closely. First of all, we regard each term of the complex $(\mathrm{EN})$ to be a $\Bbb Z$-graded $S$-module so that 
$$
\deg (T_\Lambda \otimes y_1^{q-1-\ell}y_2^\ell) = \sum_{i \in \Lambda} b_i -(\ell+1)\alpha,
$$
making $(\mathrm{EN})$ a complex of graded $S$-modules. Therefore, $C_{n-1} = \bigoplus_{i=1}^{n-1} S(i\alpha -b)$ as a graded $S$-module, where $b = \sum_{i=1}^n b_i$. Let $\rmK_S = S(-\sum_{i=1}^na_i)$ denote the graded canonical module of $S$. Then, if  $\height_S \rmI_2(X) = n-1$, by taking the $\rmK_S$-dual of the complex $(\mathrm{EN})$, we get a minimal presentation
$$
\bigoplus_{i=1}^{n-1}S(i\alpha) \to \rmK_{R} \to 0
$$
of the graded canonical module $\rmK_{R}$ of $R=S/\rmI_2(X)$, since  $\alpha = b_i - a_i$ for all $1 \le i \le n$.

\section{Proof of Theorem \ref{MainThm}}

The purpose of this section is to prove Theorem \ref{MainThm}. First of all, let us recall our notation. Let $n \ge 3$ be an integer and $H=\left<a_1, a_2, \ldots , a_n\right>$ a numerical semigroup. We assume that $H$ is minimally generated by the $n$ numbers $\{a_i\}_{1 \le i \le n}$.
Let $R=k[H]$ denote the semigroup ring of $H$ over a field $k$ and $S=k[x_1, x_2, \ldots, x_n]$ the polynomial ring. We regard $S$ as a $\Bbb Z$-graded ring so that $S_0 = k$ and $\deg x_i = a_i$ for all $1 \le i \le n$. Let 
$
\varphi : S \to R
$
be the homomorphism of graded $k$-algebras such that $\varphi(x_i) = t^{a_i}$ for all $1 \le i \le n$. We set $I=\Ker \varphi$.

We start the proof of Theorem \ref{MainThm}.

\begin{proof}[Proof of Theorem {\rm \ref{MainThm}}] \hspace{1cm} \\
 (2) $\Rightarrow$ (1) and (a). We have only to show assertion (a). Since $x_ix_i^{\ell_i}-x_{i-1}^{\ell_{i-1}}x_{i+1} \in I$ for $2 \le i \le n$ and $x_1x_1^{\ell_1}-x_2^{\ell_2}x_n \in I$, we readily get $(\ell_i + 1)a_i \in H_i$ for all $1 \le i \le n$, where $H_i = \left<a_1, \ldots , \overset{\vee}{a_i}, \ldots , a_n\right>$. Suppose that $\ell_i a_i \in H_i$ for some $1 \le i \le n$. Then $x_i^{\ell_i} - \zeta \in I$ with $\zeta \in (x_1, \ldots , \overset{\vee}{x_i}, \ldots , x_n)$. Therefore, by substituting 0 for all $\{x_j\}_{j \ne i}$, we get $x_i^{\ell_i} \in (x_i^{\ell_i+1})$, which is impossible. Thus $\ell_i = \min\{\ell >0 \mid \ell a_i \in H_i\} -1$ for all $1 \le i \le n$.

(1) $\Rightarrow$ (3) Note that $f_i \not\in I$ for all $1 \le i \le n$. In fact, suppose, say $f_1 \in I$. Then, since $x_1f_i \equiv x_1 f_i - x_if_1 \equiv 0 \ \mod~I$ and the ideal $I$ is prime, we get $f_i \in I$ for all $1 \le i \le n$, so that $I = (0)$, because $I \subseteq (x_i \mid 1 \le i \le n){\cdot}I$. This is absurd. Let $b_i=\deg f_i$ for $1\le i \le n$. Then, since $I$ is a graded ideal and since $x_if_j - f_ix_j \in I$  but $x_i f_j \notin I$ for any $1\le i < j \le n$, we see $a_i + b_j = a_j + b_i$, whence the number $b_i - a_i$ is independent of the choice of $1 \le i \le n$. We set $\alpha = b_i-a_i$ and now apply the observation in Section \ref{ENcpx}. Because $\height_S I=\dim S -\dim S/I=n-1$, the Eagon-Northcott complex associated to the matrix $
	\begin{pmatrix}
	f_1 & f_2 & \cdots & f_n\\
	x_1 & x_2 & \cdots & x_n
	\end{pmatrix} $ 
is a graded  minimal $S$-free resolution of $R$, which gives rise to a minimal graded presentation
$$
\bigoplus_{i=1}^{n-1}S(i\alpha) \to \rmK_R \to 0
$$
of the graded canonical module $\rmK_R$ of $R$, so that $$\PF(H) = \left\{\alpha, 2\alpha, \ldots , (n-1)\alpha\right\},$$ since $\rmK_R = \sum_{\alpha \in \PF(H)} Rt^{-\alpha}$. Therefore, by Proposition \ref{charAG}, $R=k[H]$ is an almost Gorenstein graded ring, which shows assertions (b), (c), and (d).

(3) $\Rightarrow$ (2) If $r \alpha \in H$ for some $1 \le r \le n-1$, then $(r+1)\alpha = \alpha + r \alpha \in H$ by definition of pseudo-Frobenius numbers, so that $(n-1) \alpha \in H$, which contradicts condition (3). Hence, $r \alpha \notin H$ for any $1 \le r \le n-1$. Because $\alpha + \beta \in \PF(H)$ if $\alpha, \beta\in \PF(H)$ and $\alpha + \beta \notin H$, we get $r \alpha \in \PF(H)$ for all $1 \le r \le n-1$, i.e.,  $\left\{\alpha, 2\alpha, \ldots, (n-1)\alpha \right\} \subseteq \PF(H)$. If $n=3$. then $\{\alpha, 2\alpha \} = \PF(H)$, so that by \cite{GMP, NNW} the implication (3) $\Rightarrow$ (2) follows (see Proposition \ref{charAG} also). Therefore, in order to show the implication, we may assume that $n \ge 4$ and that Theorem  \ref{MainThm} holds true for $n-1$.

Let $M = (m_{ij})$ be an RF-matrix associated to the pseudo-Frobenius number $\alpha$. We begin with the following.

\begin{proposition}\label{charofRF}
Every column of $M$ has at least one positive entry.
\end{proposition}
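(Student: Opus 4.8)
The plan is to prove the contrapositive by extracting arithmetic information from the RF-matrix. Recall that $M=(m_{ij})$ satisfies $m_{ii}=-1$, $m_{ij}\ge 0$ for $i\ne j$, and the $i$-th row records the relation $\alpha+a_i=\sum_{j\ne i}m_{ij}a_j$. The $k$-th column having no positive entry means $m_{ik}=0$ for every $i\ne k$ (while $m_{kk}=-1$), i.e.\ the generator $a_k$ does not appear in \emph{any} of the expressions $\alpha+a_i$ for $i\ne k$. I would fix such a hypothetical ``empty'' column $k$ and derive a contradiction with the hypothesis that $H$ is \emph{minimally} generated by $a_1,\dots,a_n$.

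The key observation is what an empty $k$-th column forces. For each $i\ne k$ we have $\alpha+a_i=\sum_{j\ne i,\,j\ne k}m_{ij}a_j$, an expression not involving $a_k$, hence $\alpha+a_i\in H_k=\langle a_1,\dots,\overset{\vee}{a_k},\dots,a_n\rangle$ for all $i\ne k$. I would combine these relations to locate $a_k$ inside the subsemigroup $H_k$. Concretely, one adds the $k$-th row relation $\alpha+a_k=\sum_{j\ne k}m_{kj}a_j$ (whose right side already lies in $H_k$) against one of the others: subtracting $\alpha+a_i=\sum m_{ij}a_j$ from $\alpha+a_k$ cancels $\alpha$ and yields
$$
a_k - a_i = \sum_{j\ne k} m_{kj}a_j - \sum_{j\ne i,\,j\ne k} m_{ij}a_j,
$$
so that $a_k + \big(\text{nonneg.\ combination of the }a_j,\,j\ne k\big) = a_i + \big(\text{nonneg.\ combination}\big)$ with both bracketed terms in $H_k$. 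The aim is to rearrange this into an equality expressing some positive multiple or sum involving $a_k$ purely in terms of $\{a_j\}_{j\ne k}$, contradicting minimality of the generating set.

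The cleanest route is probably to argue that an empty column makes $a_k$ redundant. Since $\alpha\notin H$ but $\alpha+a_k\in H$, and since every $\alpha+a_i$ ($i\ne k$) already avoids $a_k$, I would show that the coefficient $m_{kj_0}$ must be positive for some $j_0\ne k$, pick the corresponding relation, and track how $a_{j_0}$ re-enters through other rows. The governing principle is that the $n$ relations $\{\alpha+a_i\}_{i=1}^n$ together generate enough equalities among the $a_i$ to express $a_k$ through the rest once its column contributes nothing elsewhere; formally this is a dependency among the generators modulo $\langle a_j:j\ne k\rangle$ that collapses to $a_k\in H_k$, i.e.\ $a_k$ is a non-negative integer combination of the other generators, contradicting that $\{a_i\}$ is a \emph{minimal} system of generators.

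The main obstacle I anticipate is that a single pair of rows need not immediately yield $a_k\in H_k$: the subtraction above produces $a_k$ on one side but also possibly $a_k$-free positive terms whose signs must be controlled, and one has to ensure the resulting combination is genuinely non-negative rather than merely an integer relation. I expect the real work to be a careful bookkeeping argument — exploiting $m_{ii}=-1$, the non-negativity $m_{ij}\ge 0$, and the fact that $\alpha\notin H$ — to convert the column emptiness into a valid membership $a_k\in H_k$. Handling the case where the only positive entries of the relevant rows themselves involve indices that feed back into column $k$ (which is excluded by the emptiness hypothesis) is precisely where the contradiction with minimal generation should surface, so I would organize the proof around isolating one such feedback and closing it off.
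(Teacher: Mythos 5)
Your plan has a genuine gap, and it is not one that more careful bookkeeping can close: you are aiming at a contradiction with the wrong hypothesis. You try to show that an empty $k$-th column forces $a_k\in H_k=\langle a_1,\dots,\overset{\vee}{a_k},\dots,a_n\rangle$, contradicting minimal generation, and you never invoke the standing assumptions of the surrounding argument, namely condition (3) of Theorem \ref{MainThm} (which gives $r\alpha\notin H$ for all $1\le r\le n-1$) and the induction on $n$ that is in force at this point of the paper. Without those ingredients the implication you are after is simply false. Take $H=\langle 4,6,7,9\rangle$ (the paper's Example \ref{ex2}), which is minimally generated, and $\alpha=5\in\PF(H)=\{2,3,5\}$. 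Then
$$
M=\begin{pmatrix} -1&0&0&1\\ 1&-1&1&0\\ 3&0&-1&0\\ 0&0&2&-1 \end{pmatrix}
$$
is an RF-matrix associated to $\alpha$ (each row paired with $(4,6,7,9)$ gives $5$, and all off-diagonal entries are nonnegative), and its second column has no positive entry; yet $a_2=6\notin\langle 4,7,9\rangle$. So the premises you allow yourself all hold here while your desired conclusion fails, which means no amount of sign-controlled rearrangement of the row relations can derive $a_k\in H_k$ from column emptiness alone. Of course, in this example $(n-1)\alpha=15\in H$, so condition (3) fails --- which is exactly the point: the proposition is a statement about the specific $\alpha$ of condition (3), not about arbitrary RF-matrices, and the obstacle you yourself flag at the end is fatal rather than technical.

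For comparison, the paper's proof shares your opening observation (an empty column $k$ gives $\alpha+a_i\in H_k$ for all $i\ne k$) but then proceeds quite differently. It first shows $\GCD(a_j \mid j\ne k)=1$ by playing two rows against each other, so that $H'=H_k$ is a genuine numerical semigroup; then $\alpha\in\PF(H')$ and $(n-2)\alpha\notin H'$, so the induction hypothesis (Theorem \ref{MainThm} for $n-1$ generators) forces $\PF(H')=\{\alpha,2\alpha,\dots,(n-2)\alpha\}$ and hence $\rmf(H')=(n-2)\alpha$. This contradicts $(n-1)\alpha\notin H'$, because every integer exceeding the Frobenius number belongs to the semigroup. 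The contradiction is thus with the Frobenius number of the smaller semigroup, not with minimality of the generating set; a repaired version of your argument would have to import both condition (3) and the induction, at which point it essentially becomes the paper's proof.
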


\begin{proof}
Assume the contrary, say $m_{i1} = 0$ for all $2 \le i \le n$. Let $d = \GCD(a_2, a_3, \ldots , a_n)$ and consider the following two equations
\begin{eqnarray*}
\alpha &=& (- a_1) + m_{12}a_2 + m_{13}a_3+\cdots + m_{1n}a_n \\
&=&m_{21}a_1+(-a_2) + m_{23}a_3 + \cdots + m_{2n}a_n
\end{eqnarray*}
arising from the first and the second rows of $M$. Then by the latter equation, $d ~|~\alpha$ since $m_{21}=0$, so that $d~|~a_1$ by the former one. Hence $d~|~a_i$ for all $1 \le i \le n$, so that $d=1$. We consider the numerical semigroup $H' = \left<a_2, a_3, \ldots , a_n\right>$; hence  $H'\subseteq H$. Then $\alpha \not\in H'$, but $\alpha + a_i \in H'$ for every $2 \le i \le n$, because 
$$\alpha=m_{i2}a_2 + \cdots +(-a_{i}) + \cdots + m_{in}a_n.$$ Hence $\alpha \in \PF(H')$. Therefore, because $(n-2)\alpha \notin H'$, the hypothesis of induction on $n$ shows $\PF(H') = \left\{\alpha, 2\alpha, \ldots ,(n-2)\alpha\right\}$, which is impossible because $$\rmf(H') = (n-2) \alpha < (n-1) \alpha \notin H'.$$ Thus $m_{i1} > 0$ for some $2 \le i \le n$.
\end{proof}

\begin{proposition}\label{formofRF}
	After suitable permutations of $a_1, a_2, \ldots ,a_n$ if necessary, we get
$$ M=\begin{pmatrix}
		 -1    & \ell_2 &     0  & \cdots  & 0 \\ 
		  0    & -1     & \ell_3 &   \cdots & 0 \\ 
		 \vdots &  & \ddots & \ddots & \vdots \\ 
		      0 &  \cdots    &      0 & -1 & \ell_n\\ 
		\ell_1 &    0   &  \cdots   &  0 & -1 
 \end{pmatrix}$$
for some integers $\ell_1, \ell_2, \ldots , \ell_n > 0$.
\end{proposition}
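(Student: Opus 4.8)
The plan is to pass to a graph-combinatorial picture and then force the cyclic shape by restricting to sub-semigroups and by exploiting minimality of the generators. Form the directed graph $G$ on the vertex set $\{1,\dots,n\}$ with an arrow $i\to j$ exactly when $m_{ij}>0$; since $m_{ii}=-1$ these arrows satisfy $i\ne j$. Reading the $i$-th row of $M\cdot(a_1,\dots,a_n)^{T}=\alpha\cdot(1,\dots,1)^{T}$ as $\alpha+a_i=\sum_{j\ne i}m_{ij}a_j$, the right-hand side equals $\alpha+a_i>0$ (recall $(n-1)\alpha=\rmf(H)>0$, so $\alpha>0$), whence every vertex has out-degree at least one, while Proposition \ref{charofRF} says every vertex has in-degree at least one. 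What must be shown is that, after renumbering, $G$ is the single directed cycle $1\to2\to\cdots\to n\to1$ and that each row (equivalently each column) of $M$ carries a single positive entry. I would isolate two statements: (A) each row and each column of $M$ has exactly one positive entry, so that $N:=M+I$ is a weighted permutation matrix; and (B) the associated permutation is a single $n$-cycle.

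For (B) I would use the restriction device of Proposition \ref{charofRF}, now applied to invariant subsets. Call $T\subseteq\{1,\dots,n\}$ out-closed if $i\in T$ and $m_{ij}>0$ force $j\in T$; here $|T|\ge2$, since every vertex has an out-arrow. For $i\in T$ the relation above expresses $\alpha+a_i$ inside $H_T:=\langle a_i\mid i\in T\rangle$ and shows $d\mid\alpha$ for $d=\GCD(a_i\mid i\in T)$. Because $\{a_i\}$ generates $H$ minimally, the subfamily $\{a_i\mid i\in T\}$ generates $H_T$ minimally, so after dividing by $d$ we obtain a numerical semigroup $\overline{H_T}$ of embedding dimension $m:=|T|$ with $\overline\alpha:=\alpha/d\in\PF(\overline{H_T})$. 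As $(m-1)\alpha\in\PF(H)$ we have $(m-1)\overline\alpha\notin\overline{H_T}$, so condition (3) holds for $\overline{H_T}$; the inductive hypothesis, applied in embedding dimension $m<n$, gives $\PF(\overline{H_T})=\{\overline\alpha,\dots,(m-1)\overline\alpha\}$ and hence $\rmf(\overline{H_T})=(m-1)\overline\alpha$. Then $m\overline\alpha>\rmf(\overline{H_T})$ forces $m\alpha\in H_T\subseteq H$, contradicting $m\alpha\in\PF(H)$ (here $1\le m\le n-1$). Thus no proper out-closed $T$ exists; once (A) is known, each cycle of the permutation $N$ is out-closed, so this rules out every cycle of length $<n$ and leaves a single $n$-cycle.

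The subtler half is (A), and I expect it to be the main obstacle: strong connectivity alone does not forbid extra arrows, so superfluous entries must be excluded by arithmetic, and here minimality of the generators is essential. The guiding observation is that if $a_i\equiv a_{i'}\pmod{a_j}$ for three distinct indices $i,i',j$, then the larger of $a_i,a_{i'}$ lies in $\langle a_j,\ \text{smaller}\rangle$, contradicting minimality; hence, modulo each fixed $a_j$, the remaining generators occupy pairwise distinct nonzero residues. Confronting the congruences $\alpha+a_i\equiv\sum_{l\ne i}m_{il}a_l\pmod{a_j}$ with this rigidity is meant to show that two positive entries in one row, or in one column, would force two of these distinct residues to coincide. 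The clean case is immediate: if two single-entry rows pointed to a common column $j$, say $\alpha+a_i=m a_j$ and $\alpha+a_{i'}=m'a_j$, then $a_i\equiv-\alpha\equiv a_{i'}\pmod{a_j}$, the forbidden coincidence. The delicate point, which I would have to handle with care, is that a row (or column) carrying two positive entries only constrains a sum of residues a priori; turning this into a genuine collision of two minimal generators modulo a third — rather than a mere congruence among several terms — is where the argument needs the finer interplay of the multiplicities $m_{ij}$ with the pseudo-Frobenius condition, possibly together with the fact, already available, that $\PF(H)$ contains the full chain $\alpha,2\alpha,\dots,(n-1)\alpha$.

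Granting (A) and (B), $N=M+I$ is a weighted permutation matrix whose permutation is an $n$-cycle; relabelling $a_1,\dots,a_n$ so that this cycle is $1\to2\to\cdots\to n\to1$ and naming the unique positive entry of the $i$-th row $\ell_{i+1}$ (indices read cyclically, with the last row contributing $\ell_1$ in the first column) puts $M$ in exactly the displayed form, each $\ell_i>0$.
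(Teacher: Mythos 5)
Your step (B) is essentially sound, and it is in fact a generalization of the paper's own Proposition \ref{charofRF}: restrict to a sub-semigroup supported on an invariant index set, divide by the GCD, and invoke the inductive hypothesis of Theorem \ref{MainThm} to force $m\alpha\in H$, contradicting $m\alpha\in\PF(H)$. (Two small repairs are needed there: the induction must be strong induction over all embedding dimensions below $n$, not just $n-1$; and $|T|=2$ falls outside the scope of Theorem \ref{MainThm}, though that case is easy since a $2$-generated numerical semigroup is symmetric, so $\PF(\overline{H_T})$ is the singleton $\{\rmf(\overline{H_T})\}$ and the same contradiction results.)

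However, your step (A) --- that every row and every column of $M$ has exactly one positive entry --- is a genuine gap, and you acknowledge it yourself: the residue-collision strategy handles only the ``clean case'' of two single-entry rows pointing at a common column, and you concede that a row with two positive entries yields merely a congruence among sums of residues, not a collision of two minimal generators modulo a third. Since (B) is useless without (A) (strong connectivity of the digraph does not forbid extra arrows), the proposal as written does not prove the proposition. The ingredient you are missing is the paper's row-summation trick, which settles (A) and the cycle structure simultaneously: for a fixed row $\ell$, sum the relations $(RF_i)$ over all $i\neq\ell$ to get
$$(n-1)\alpha=\sum_{j=1}^{n}\Bigl(\sum_{i\neq\ell}m_{ij}\Bigr)a_j\notin H,$$
so some column sum $\sum_{i\neq\ell}m_{ij}$ is negative; because the off-diagonal entries are $\geq 0$ and the diagonal contributes exactly $-1$, this forces $m_{ij}=0$ for every $i\neq j,\ell$, i.e.\ column $j$ has its \emph{unique} positive entry in row $\ell$ (positivity of $m_{\ell j}$ coming from Proposition \ref{charofRF}). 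A second summation, over the rows $1,\ldots,\ell+1$, representing $(\ell+1)\alpha\notin H$, then pins down the cyclic shape inductively (this is Claim \ref{claim3} in the paper). Note that the chain $\{\alpha,2\alpha,\ldots,(n-1)\alpha\}\subseteq\PF(H)$, which you list as ``already available,'' is exactly what powers these summations: every partial row sum represents a multiple $r\alpha$ with $1\leq r\leq n-1$, hence a non-element of $H$, hence must exhibit a negative coefficient. You had the right fact in hand but not the mechanism that exploits it.
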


\begin{proof} 
Remember that for all $1 \le i \le n$, $m_{ii}=-1$  and 
$$
\alpha = \sum_{j=1}^n m_{ij}a_j \qquad (RF_i).
$$
Then we have the following.

\begin{claim}\label{claim3}
Let $1 \le \ell \le n-1$ be an integer. Then after suitable permutations of $a_1, a_2, \ldots, a_n$ if necessary, the following three assertions hold true for all $1 \le k \le \ell$.
\begin{enumerate}[{\rm (1)}]
\item $m_{k, k+1} >0$. 
\item $m_{i,k+1}=0$ for all $1 \le i \le  n$ such that $i \ne k,k+1$. 
\item $m_{i1}=0$ for all $2 \le i \le k+1$ such that $i \ne n$.
\end{enumerate}\end{claim}

\begin{proof}
We may assume that $1 \le \ell \le n-1$ and assertions (1), (2), and (3) hold true for $\ell-1$. Consider the sum of the equations $\{(RF_i)\}_{i \ne \ell}$. We then have  
$$
(n-1)\alpha = \sum_{j=1}^n \left(\sum_{i \ne \ell} m_{ij}\right) a_j \notin H,
$$
whence $\sum_{i \ne \ell} m_{ij} < 0$ for some $1 \le j \le n$. Suppose $j=1$. Then $\ell \ge 2$ and $m_{i1} = 0$ for all $i \ne 1,\ell$, so that $m_{\ell,1} > 0$ by Proposition \ref{charofRF}, which contradicts assertion (3). Therefore $j \ge 2$. Suppose $2 \le j \le \ell$. Then $j < \ell$ by Proposition \ref{charofRF}, because $\sum_{i \ne \ell} m_{ij} < 0$. Hence $m_{ij}=0$ for all $i \ne j, \ell$ and $m_{\ell, j} >0$. Consequently, assertions (1) and (2) imply that $\ell=j-1$, which is absurd. Hence $\ell +1 \le  j$ and therefore, $m_{ij}=0$ for $i \ne \ell, j$, whence $m_{\ell,j} >0$. Therefore, after exchanging $a_{\ell+1}$ for $a_j$ if necessary, assertions (1) and (2) follow for $k = \ell $.

To complete the proof of Claim \ref{claim3}, we must show assertion (3).  Because  $m_{i1}=0$ if $2 \le i \le \ell$, we have only to show that $m_{\ell+1,1} = 0$, provided $\ell \le n - 2$. Consider the sum of equations $\{(RF_i)\}_{1 \le i \le \ell +1}$, and we get 
$$
(\ell+1) \alpha = \left((-1) + m_{\ell+1,1}\right)a_1 + \sum_{i=2}^{\ell+1}(m_{i-1,i}-1)a_i + \sum_{j=\ell+1}^n(\sum_{i=1}^{\ell+1} m_{ij})a_j \not\in H, 
$$
whence $m_{\ell+1,1} = 0$, because $m_{i-1, i}> 0$ for all $2 \le i \le \ell+1$.\end{proof}

By Claim \ref{claim3}, after suitable permutations of $a_1, a_2, \ldots, a_n$ if necessary, the RF-matrix $M$ associated to $\alpha$  has the form 
$$ M=\begin{pmatrix}
		 -1    & \ell_2 &     0  & \cdots  & 0 \\ 
		  0    & -1     & \ell_3 &   \cdots & 0 \\ 
		 \vdots &  & \ddots & \ddots & \vdots \\ 
		      0 &  \cdots    &      0 & -1 & \ell_n\\ 
		m_{n1} &    0   &  \cdots   &  0 & -1 
 \end{pmatrix}$$
for some integers $\ell_2, \ell_3, \ldots , \ell_n > 0$. We then have $m_{n1} > 0$ by Proposition \ref{charofRF}, which completes the proof of Proposition \ref{formofRF}, setting $\ell_1 = m_{n1}$.
\end{proof}

Let us show that  $$ I=\rmI_2 \begin{pmatrix} x_2^{\ell_2}& \cdots &x_n^{\ell_n}&x_1^{\ell_1} \\ 
x_1& \cdots &x_{n - 1}&x_n 
\end{pmatrix}.$$
We set $ J=\rmI_2 \begin{pmatrix} x_2^{\ell_2}& \cdots &x_n^{\ell_n}&x_1^{\ell_1} \\ 
x_1& \cdots &x_{n - 1}&x_n 
\end{pmatrix}$. Then $J\subseteq I$ by Proposition \ref{relRF}. Note that $J$ is a perfect ideal in $S$ of height $n-1$, since $\sqrt{J+(x_1)}=(x_i \mid 1 \le i \le n)$. Therefore, the Eagon-Northcott complex 
$$
0 \to C_{n-1} \to C_{n-2} \to \cdots \to C_1 \to C_0 \to 0
$$
associated to the matrix
$\begin{pmatrix}
x_2^{\ell_2}& \cdots &x_n^{\ell_n}&x_1^{\ell_1}\\
x_1& \cdots &x_{n-1}&x_n
\end{pmatrix}$
gives rise to a graded minimal $S$-free resolution of $S/J$. Let 
$$
0  \to F_{n-1} \to \cdots \to  F_1 \to F_0 \to R \to 0
$$ be a graded minimal $S$-free resolution of $R=S/I$.  Let $\tau : S/J \to R$ be the canonical epimorphism and lift $\tau$ to a homomorphism 
$$
\xymatrix{
	0  \ar[r] & F_{n-1}\ar[r] &\cdots \ar[r] & F_1 \ar[r]& F_0 \ar[r]& R \ar[r] &0 \\
	0 \ar[r]&C_{n-1}\ar[r] \ar[u]^{f_{n-1}}&\cdots \ar[r] & C_1 \ar[r] \ar[u]^{f_1}& C_0 \ar[r] \ar[u]^{f_0}& S/J \ar[r]\ar[u]^\tau &0
}
$$
of complexes of graded $S$-modules. Let $\rmK_S=S(-\sum_{i=1}^na_i)$ denote the graded canonical module of $S$ and take the $\rmK_S$-dual of the above commutative diagram. Then we get a commutative diagram
$$
\xymatrix{
	&{F^\vee_{n-1}} {\ar[r]_\rho}{\ar[d]_\psi} &\rmK_R \ar[r] \ar[d]_{\varepsilon} &0 \\
	&{C^\vee_{n-1}} {\ar[r]_\eta} &\rmK_{S/J} \ar[r]& 0
}
$$
of minimal graded presentations of $\rmK_R$ and $\rmK_{S/J}$, where $(-)^\vee = \Hom_S(-, \rmK_S)$ and $\psi=\Hom_S(f_{n-1},\rmK_S)$ denotes the homomorphism induced from $f_{n-1}$. Remember that $i\alpha \in \PF(H)$ for all $1 \le i \le n-1$ and we identify $$C^\vee_{n-1}=\bigoplus_{i=1}^{n-1}S(i\alpha)\ \ \ \text{and}\ \ \ F^\vee_{n-1} =C_{n-1}^\vee \oplus \bigoplus_{\beta \in \PF(H) \setminus \{i\alpha \mid 1 \le i \le n-1\}}S(\beta)$$ (see Section \ref{ENcpx} and Proposition \ref{charAG}, respectively). Then the presentation matrix $\Bbb M$ of the homomorphism $\psi : F^\vee_{n-1} \to C^\vee_{n-1}$ of graded $S$-modules has the form
$$\Bbb M = 
\left(
\begin{array}{cccc|c}
c_1                    &      &           &           & \\
                        & c_2 &           &  \mbox{\Huge $*$}         &  \\
                        &      & \ddots &           & \mbox{\Huge $*$} \\
\mbox{\Huge $0$}&      &           & c_{n-1} &
\end{array}\right),$$
where $c_1, c_2, \ldots , c_{n-1} \in S_0=k$. On the other hand, the canonical exact sequence 
$$0\rightarrow I/J\rightarrow S/J\rightarrow R \rightarrow 0$$
induces a commutative diagram 
$$
\xymatrix{
0 \ar[r] & \Ext^{n-1}_S(R,K_S) \ar[r] \ar[d]^\cong & \Ext^{n-1}_S(S/J,K_S) \ar[r] \ar[d]^\cong & \Ext^{n-1}_S(I/J,K_S) \ar[r] &0\\
 & \rmK_{R} \ar[r]^\varepsilon & \rmK_{S/J} & &
}~(\sharp)
$$
with exact first row, because $I/J$ is a $1$-dimensional Cohen-Macaulay $S$-module, provided  $I/J \ne (0)$. Consequently, the homomorphism $\varepsilon : \rmK_R \to \rmK_{S/J}$ is injective. We furthermore have the following.

\begin{proposition}\label{claim4}
$c_ i \ne 0$ for any $1 \le i \le n-1$.
\end{proposition}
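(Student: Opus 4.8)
The plan is to read off the diagonal entry $c_i$ from the commutative square $\eta\circ\psi=\varepsilon\circ\rho$ by a degree count, and then invoke the injectivity of $\varepsilon$ coming from diagram $(\sharp)$. Write $\{v_1,\dots,v_{n-1}\}$ for the free basis of the summand $\bigoplus_{i=1}^{n-1}S(i\alpha)$ sitting inside $F_{n-1}^\vee$, and $\{u_1,\dots,u_{n-1}\}$ for the free basis of $C_{n-1}^\vee=\bigoplus_{i=1}^{n-1}S(i\alpha)$, so that $\deg v_i=\deg u_i=-i\alpha$ and $\psi(v_i)=\sum_{j=1}^{n-1}\Bbb M_{ji}\,u_j$. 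Because the presentations are minimal, $\rho(v_i)$ is a minimal generator of $\rmK_R$ of degree $-i\alpha$ and $\eta(u_i)$ one of $\rmK_{S/J}$ of the same degree; since $i\alpha\in\PF(H)$ we may take $\rho(v_i)=t^{-i\alpha}\neq 0$, and $\eta(u_i)\neq 0$ as it is a minimal generator.

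The key observation is that the left $(n-1)\times(n-1)$ block of $\Bbb M$ is in fact diagonal. Indeed, as $\psi$ is a degree-preserving map, the entry $\Bbb M_{ji}$ is a homogeneous element of $S$ of degree $(j-i)\alpha$. For $j<i$ this degree is negative, so $\Bbb M_{ji}=0$; for $j>i$ we have $1\le j-i\le n-2$, and since we have already shown that $r\alpha\notin H$ for all $1\le r\le n-1$, there is no monomial of degree $(j-i)\alpha$, i.e. $S_{(j-i)\alpha}=0$, whence again $\Bbb M_{ji}=0$. Equivalently, this says $(\rmK_{S/J})_{-i\alpha}=k\cdot\eta(u_i)$ is one-dimensional. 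Consequently $\psi(v_i)=c_i\,u_i$, and applying $\eta$ and using commutativity gives the single identity
$$
\varepsilon(t^{-i\alpha})=\eta\bigl(\psi(v_i)\bigr)=c_i\,\eta(u_i).
$$

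Now the conclusion is immediate: $\varepsilon$ is injective by the exactness of the first row of $(\sharp)$, and $t^{-i\alpha}\neq 0$, so $\varepsilon(t^{-i\alpha})\neq 0$; since $\eta(u_i)\neq 0$ this forces $c_i\neq 0$ for every $1\le i\le n-1$. The only genuinely substantive step is the middle one, namely the vanishing of the off-diagonal entries of the left block of $\Bbb M$, and I expect this to be the main point to get right: it is exactly here that the hypothesis $r\alpha\notin H$ (for $1\le r\le n-1$) is indispensable, as it collapses the relevant graded pieces of $S$—and hence of $\rmK_{S/J}$—to dimension one, turning the a priori triangular block into a diagonal one and reducing the whole statement to the nonvanishing of a single image under the injective map $\varepsilon$.
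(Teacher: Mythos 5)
Your proof is correct, and it differs from the paper's in exactly the step where the two arguments could diverge. Both share the same endgame: the commutative square $\eta\circ\psi=\varepsilon\circ\rho$, the injectivity of $\varepsilon$ extracted from the exact row of $(\sharp)$, and the minimality of the presentation $\rho$, which guarantees $\rho(v_i)\neq 0$. The paper, however, only exploits the \emph{triangular} shape of the left block of $\Bbb M$ and then argues by contradiction: it picks a minimal $i$ with $c_i=0$, performs elementary column transformations to normalize the earlier diagonal entries, and concludes $\psi(\mathbf{e}_i)=0$. You instead show the left block is \emph{diagonal on the nose}: the $(j,i)$ entry is homogeneous of degree $(j-i)\alpha$, which vanishes for $j<i$ because the degree is negative, and for $j>i$ because $S_{(j-i)\alpha}=0$ --- the degrees of monomials in $S$ are exactly the elements of $H$, and $r\alpha\notin H$ for $1\le r\le n-2$ was established at the start of the proof of (3) $\Rightarrow$ (2). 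This gives $\psi(v_i)=c_iu_i$ outright, so the contradiction setup and the column operations disappear; it also \emph{justifies}, rather than asserts, the zero block in the paper's displayed form of $\Bbb M$, and it hands you for free the diagonal normal form of $\Bbb M$ that the paper re-derives by column transforms after the Proposition. In fact your observation is what makes the paper's own step ``therefore the $i$-th row of $\Bbb M$ has to be $\mathbf{0}$'' airtight: the column operations by themselves do not kill the remaining entries of positive degree $(l-i)\alpha$ in that row, so some appeal to $S_{r\alpha}=0$ is needed in either approach, and you are the one who makes it explicit. Two cosmetic blemishes, neither a gap: you cannot ``take'' $\rho(v_i)=t^{-i\alpha}$, since $\rho$ is prescribed by dualizing the chosen resolution --- but all you use is $\rho(v_i)\neq 0$, which minimality supplies; and the final appeal to $\eta(u_i)\neq 0$ is superfluous, since $c_i\eta(u_i)=\varepsilon(\rho(v_i))\neq 0$ already forces $c_i\neq 0$.
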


\begin{proof}
Suppose that $c_i=0$ for some $1 \le i \le n-1$ and choose such $i$ as small as possible. We may assume, after suitable elementary column transforms on $\Bbb M$, that $c_j = 1$ for all $1 \le j < i$ and that the first $i-1$ rows of $\Bbb M$ are all $\mathbf{0}$. Therefore, the $i$-th row of the matrix $\Bbb M$ has to be $\mathbf{0}$, i.e., $\psi(\mathbf{e}_i)=0$, where $\{\mathbf{e}_j\}_{1 \le j \le n-1}$ denotes the standard basis of $C^\vee_{n-1}=\bigoplus_{j=1}^{n-1}S(j\alpha)$. Consequently, because $\varepsilon(\rho(\mathbf{e}_i))=\eta(\psi(\mathbf{e}_i))=0$ and the homomorphism $\varepsilon$ is injective, we get $\rho(\mathbf{e}_i)=0$. This is, however, impossible, because $\rho(\mathbf{e}_i)$ is a part of a minimal system of generators of $\rmK_R$. Thus $c_i \ne 0$ for any $1 \le i \le n-1$.
\end{proof}

Consequently, after suitable elementary column transforms on $\Bbb M$, we may assume that $\Bbb M$ has the form
$$\Bbb M = 
\left(
\begin{array}{cccc|c}
1                    &      &           &           & \\
                        & 1&           &  \mbox{\Huge $0$}         &  \\
                        &      & \ddots &           & \mbox{\Huge $0$} \\
\mbox{\Huge $0$}&      &           & 1 &
\end{array}\right),$$ 
which guarantees that $\PF(H) = \{i\alpha \mid 1 \le i \le n-1\}$, because the homomorphism $$F^\vee_{n-1} =\bigoplus_{i=1}^{n-1}S(i\alpha)\oplus \bigoplus_{\beta \in \PF(H) \setminus \{i\alpha \mid 1 \le i \le n-1\}}S(\beta) \to \rmK_R \to 0$$ is minimal and the homomorphism $\varepsilon$ is injective. Therefore, $\psi: F_{n-1}^\vee \to C_{n-1}^\vee$ is an isomorphism, whence so is the homomorphism $\varepsilon : \rmK_R \to \rmK_{S/J}$. Thus $I=J$ because $\Ext_S^{n-1}(I/J,\rmK_S)= (0)$ by sequence $(\sharp)$, which completes the proof of Theorem \ref{MainThm}.
\end{proof}

\section{Examples}

To close this article, let us note a few examples. First, we give  Example \ref{ex1} (resp. Example \ref{ex2}) which satisfies (resp. does not satisfy) the conditions in Theorem \ref{MainThm}. Let $k$ be a field.

\begin{ex}\label{ex1}
Let $n$ and $\alpha$ be positive integers such that $n \ge 3$ and $\operatorname{GCD}(n, \alpha)=1$. We consider the numerical semigroup $H=\left<n, n+\alpha, n+2\alpha, \ldots , n+(n-1)\alpha \right>$. Then $\PF(H) = \{\alpha, 2\alpha, \ldots , (n-1)\alpha \}$ and $H$ satisfies condition (3) of Theorem \ref{MainThm}.
\end{ex}

\begin{ex}\label{ex2}
Let $H=\left<4,6,7,9\right>$. Then $\PF(H) = \{2,3,5\}$. The ring $R=k[H]$ is an almost Gorenstein graded ring (see Proposition \ref{charAG}), but does not satisfy the conditions in Theorem \ref{MainThm}. The defining ideal $I$ of $R$ is given by 
$$I=
\rmI_2 \begin{pmatrix}
x_2 & x_1^2 & x_4 & x_1x_3\\
x_1 & x_2 & x_3 & x_4
\end{pmatrix}+
\rmI_2 \begin{pmatrix}
x_3 & x_4 & x_1x_2 & x_1^3\\
x_1 & x_2 & x_3 & x_4
\end{pmatrix},
$$
which cannot be generated by the $2 \times 2$ minors of any $2 \times 4$ matrix.
We however have $\mu_S(I) = 6$, where $\mu_S(I)$ denotes the number of elements in a minimal homogeneous system of generators of $I$.
\end{ex}

If $n=4$ and the numerical semigroup $H$ satisfies the conditions in Theorem \ref{MainThm}, we then have $\mu_S(I) = 6$. However, even if $n=4$ and the semigroup ring $R=k[H]$ is an almost Gorenstein graded ring, the equality $\mu_S(I) = 6$ does not hold true in general. We note one example.

\begin{ex}
Let $H=\left<10,11,13,14\right>$. Then $\PF(H) = \{12,17,29\}$ and $R=k[H]$ is an almost Gorenstein graded ring. The defining ideal $I$ of $R$ is given by
$$I=
\rmI_2 \begin{pmatrix}
x_2^2 & x_1x_3 & x_2x_4 & x_3^2\\
x_1 & x_2 & x_3 & x_4
\end{pmatrix}+
\rmI_2 \begin{pmatrix}
x_3x_4 & x_4^2 & x_1^3 & x_1^2x_2\\
x_1 & x_2 & x_3 & x_4
\end{pmatrix} + (x_1x_4 - x_2x_3)
$$
and $\mu_S(I) = 7$.
\end{ex}



\end{document}